\newtheorem{theorem}{Theorem}
\newtheorem{lemma}[theorem]{Lemma}
\newtheorem{proposition}[theorem]{Proposition}
\theoremstyle{definition}\newtheorem{definition}[theorem]{Definition}
\theoremstyle{definition}\newtheorem{example}[theorem]{Example}
\theoremstyle{definition}
\newcommand{\Aut}{\operatorname{Aut}}
\newcommand{\Mod}{\operatorname{ mod}}
\newcommand{\sage}{\textsc{SageMath}}
\newcommand{\FlexRiLoG}{\textsc{FlexRiLoG}}
\newcommand{\blue}{\text{blue}}
\newcommand{\red}{\text{red}}
\newcommand{\conjugate}[1]{\overline{#1}}
\DeclareMathOperator{\UpairsCn}{U_{\Cn}}
\newcommand{\upairsCn}[1]{\UpairsCn(#1)}
\DeclareMathOperator{\CDCCn}{CDC_{\Cn}}
\newcommand{\cdcCn}[1]{\CDCCn(#1)}
\newcommand{\RR}{\mathbb{R}}
\newcommand{\CC}{\mathbb{C}}
\renewcommand{\C}{\mathcal C}%
\newcommand{\C}{\mathcal C}%
\newcommand{\Cn}{\mathcal{C}_n} 
\newcommand{\Wfun}[2]{W_{#1,#2}}
\newcommand{\Zfun}[2]{Z_{#1,#2}}
\colorlet{ecol}{black!50!white}
\definecolor{colR}{rgb}{.932,.172,.172} 
\definecolor{colB}{rgb}{.255,.41,.884} 
\colorlet{colG}{Gold}
\colorlet{col1}{LightGreen}
\colorlet{col2}{IndianRed!80!white}
\colorlet{col3}{Gold}
\colorlet{col4}{LightSkyBlue}
\colorlet{col5}{BurlyWood}
\tikzstyle{vertex}=[circle, draw, fill=black, inner sep=0pt, minimum size=4pt]
\tikzstyle{smallvertex}=[circle, draw, fill=black, inner sep=0pt, minimum size=2pt]
\tikzstyle{midvertex}=[circle, draw, fill=black, inner sep=0pt, minimum size=3pt]
\tikzstyle{lnode}=[circle,white,draw=black!60!white,fill=black!60!white,inner sep=1pt, font=\scriptsize]
\tikzstyle{lnodesmall}=[circle,white,draw=black!60!white,fill=black!60!white,inner sep=1pt, font=\scriptsize]
\colorlet{colvR}{black!70!white}
\tikzstyle{lnodeR}=[circle,colvR,draw=colvR,fill=white,inner sep=1.2pt, font=\scriptsize]
\tikzstyle{vertexR}=[circle,thick,draw=colvR,fill=white,inner sep=0pt, minimum size=4.5pt]
\tikzstyle{midvertexR}=[circle,thick,draw=colvR, fill=white, inner sep=0pt, minimum size=3.25pt]
\tikzstyle{smallvertexR}=[circle,thick,draw=colvR, fill=white, inner sep=0pt, minimum size=2pt]
\tikzstyle{extlabel}=[circle,black,draw=white,fill=white,inner sep=1pt, font=\scriptsize]
\tikzstyle{edge}=[line width=1.5pt,ecol]
\tikzstyle{redge}=[edge,colR]
\tikzstyle{bedge}=[edge,colB]
\tikzstyle{gedge}=[edge,colG]
\tikzstyle{gridl}=[ecol]
\tikzstyle{gridp}=[inner sep=1pt,circle,fill=black!70!white]
\tikzstyle{sym}=[ecol,dashed]
\tikzstyle{axes}=[gridl,-latex]
\colorlet{ncol}{Green!60!black}
\tikzstyle{nvertex}=[vertex, draw=ncol, fill=ncol]
\tikzstyle{edgeq}=[edge,gray!60,densely dashed]
\tikzstyle{nedge}=[edge,ncol]
\tikzstyle{oedge}=[edge,Red!60!black]
\tikzstyle{bdedge}=[line width=1.5pt,colB, densely dashed]
\tikzstyle{rdedge}=[line width=1.5pt,colR, densely dashed]
\title{Flexible placements of graphs with rotational symmetry}
\author{Sean Dewar\thanks{Johann Radon Institute for Computational and Applied Mathematics (RICAM), Austrian Academy of Sciences}
\and Georg Grasegger\footnotemark[1]
\and Jan Legerský\thanks{Johannes Kepler University Linz, Research Institute for Symbolic Computation (RISC)}
\thanks{Department of Applied Mathematics, Faculty of Information Technology, Czech Technical University in Prague}}
\date{}
\begin{document}

\maketitle

\begin{abstract}
We study the existence of an $n$-fold rotationally symmetric placement of a symmetric graph in the plane
allowing a continuous deformation that preserves the symmetry and the distances between adjacent vertices.
We show that such a flexible placement exists if and only if the graph has a NAC-colouring satisfying an additional property on the symmetry;
a NAC-colouring is a surjective edge colouring by two colours such that
every cycle is either monochromatic,
or there are at least two edges of each colour.    
\end{abstract}

Rigid graphs are those which have only finitely many non-congruent placements in the plane 
with the same edge lengths as a generic placement.
These graphs can, however, have non-generic special choices of a placement that can be continuously deformed by an edge length preserving motion into a non-congruent placement.
We call such a placement \emph{flexible}.
The study of flexible placements of generically rigid graphs has a long history.
Dixon found two types of flexible placements of the bipartite graph $K_{3,3}$ \cite{Dixon,Wunderlich1976,Stachel}.
Walter and Husty~\cite{WalterHusty} proved that these are indeed all (assuming that vertices do not overlap).
Figure~\ref{fig:symmetricDixon} shows some special symmetric cases of these two constructions applied to $K_{4,4}$.

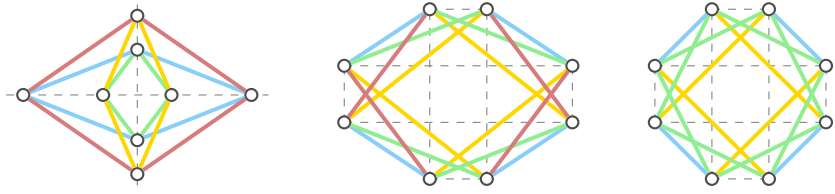
\begin{figure}[ht]
 	\centering
		\begin{tikzpicture}[scale=0.75]
			\draw[gridl, dashed] (-2.3,0)edge(2.3,0);
			\draw[gridl, dashed] (0,1.6)edge(0,-1.6);
			\node[vertexR] (2) at (2, 0) {};
			\node[vertexR] (5) at (-0.6, 0) {};
			\node[vertexR] (7) at (0.6, 0) {};
			\node[vertexR] (3) at (-2,  0) {};
			\node[vertexR] (1) at (0, -1.4) {};
			\node[vertexR] (6) at (0,  0.8) {};
			\node[vertexR] (4) at (0, -0.8) {};
			\node[vertexR] (8) at (0, 1.4) {};
			\draw[edge, col1] (6)edge(5) (5)edge(4) (7)edge(4) (7)edge(6);
			\draw[edge, col4] (3)edge(4) (3)edge(6) (2)edge(4) (2)edge(6);
			\draw[edge, col3] (8)edge(5) (7)edge(8) (1)edge(5) (7)edge(1) ;
			\draw[edge, col2] (2)edge(1) (2)edge(8) (8)edge(3) (1)edge(3);
		\end{tikzpicture}
		\qquad
		\begin{tikzpicture}[scale=0.75]
			\draw[gridl, dashed] (1.5,0)edge(2.5,0) (1.5,3)edge(2.5,3) (0,1)edge(4,1) (0,2)edge(4,2);
			\draw[gridl, dashed] (0,1)edge(0,2) (4,1)edge(4,2) (1.5,0)edge(1.5,3) (2.5,0)edge(2.5,3); 
			\node[vertexR] (7) at (1.5,3) {};
			\node[vertexR] (5) at (4,1) {};
			\node[vertexR] (3) at (2.5,0) {};
			\node[vertexR] (6) at (0,1) {};
			\node[vertexR] (2) at (1.5,0) {};
			\node[vertexR] (4) at (4,2) {};
			\node[vertexR] (1) at (0,2) {};
			\node[vertexR] (8) at (2.5,3) {};
			\draw[edge, col4] (4)edge(8) (7)edge(1) (5)edge(3) (2)edge(6) ;
			\draw[edge, col3] (6)edge(8) (3)edge(1) (2)edge(4) (5)edge(7);
			\draw[edge, col1] (1)edge(8) (2)edge(5) (7)edge(4) (3)edge(6);
			\draw[edge, col2] (5)edge(8) (2)edge(1) (7)edge(6) (3)edge(4) ;
		\end{tikzpicture}
		\qquad
		\begin{tikzpicture}[scale=0.75]
			\draw[gridl, dashed] (1,0)edge(2,0) (1,3)edge(2,3) (0,1)edge(3,1) (0,2)edge(3,2);
			\draw[gridl, dashed] (0,1)edge(0,2) (3,1)edge(3,2) (1,0)edge(1,3) (2,0)edge(2,3); 
			\node[vertexR] (7) at (1,3) {};
			\node[vertexR] (5) at (3,1) {};
			\node[vertexR] (3) at (2,0) {};
			\node[vertexR] (6) at (0,1) {};
			\node[vertexR] (2) at (1,0) {};
			\node[vertexR] (4) at (3,2) {};
			\node[vertexR] (1) at (0,2) {};
			\node[vertexR] (8) at (2,3) {};
			\draw[edge, col4] (4)edge(8) (7)edge(1) (5)edge(3) (2)edge(6) ;
			\draw[edge, col3] (6)edge(8) (3)edge(1) (2)edge(4) (5)edge(7);
			\draw[edge, col1] (5)edge(8) (2)edge(1) (7)edge(6) (3)edge(4) (1)edge(8) (2)edge(5) (7)edge(4) (3)edge(6);
		\end{tikzpicture}
	\caption{The vertices of $K_{4,4}$ can be placed symmetrically on orthogonal lines to make the graph flexible with $2$-fold rotational symmetry (left).
		A $2$-fold rotationally symmetric flexible instance of $K_{4,4}$ is obtained by placing the vertices of each part to a rectangle
		so that the two rectangles have the same intersection of diagonals and parallel/orthogonal edges (middle).
		Although there is a $4$-fold rotationally symmetric choice of rectangles (right),
		the deformed placements preserving the edge lengths are only $2$-fold symmetric. The colours indicate equality of edge lengths in a placement.}
	\label{fig:symmetricDixon}
\end{figure}

In recent works \cite{flexibleLabelings,movableGraphs,GGLSsphereflex} a deeper analysis of existence of flexible placements is done via graph colourings.
There is a special type of edge colourings, called NAC-colourings (``No Almost Cycles'', see \cite{flexibleLabelings}),
which classify the existence of a flexible placement in the plane and give a construction of the motion. 
Furthermore,
determining the NAC-colourings of a given reasonably large graph
and the corresponding constructions can be done by using the \sage{} package \FlexRiLoG{}~\cite{flexrilog}.
In \cite{GLSbridges19} we used these methods for constructing flexible placements for symmetric graphs as in Figure~\ref{fig:star}.
However, we did not take advantage of the symmetry, and instead had to construct the framework manually.

\begin{figure}[ht]
  \centering
    \begin{tikzpicture}[scale=0.45]
			\begin{scope}[xshift=0.cm,yshift=0.cm]
				\node[smallvertexR] (1) at (2.866,-0.5) {};
				\node[smallvertexR] (2) at (3.732,0.) {};
				\node[smallvertexR] (3) at (4.232,0.866) {};
				\node[smallvertexR] (4) at (4.232,1.866) {};
				\node[smallvertexR] (5) at (3.732,2.732) {};
				\node[smallvertexR] (6) at (2.866,3.232) {};
				\node[smallvertexR] (7) at (1.866,3.232) {};
				\node[smallvertexR] (8) at (1.,2.732) {};
				\node[smallvertexR] (9) at (0.5,1.866) {};
				\node[smallvertexR] (10) at (0.5,0.866) {};
				\node[smallvertexR] (11) at (1.,0.) {};
				\node[smallvertexR] (12) at (1.866,-0.5) {};
				\node[smallvertexR] (13) at (1.366,-1.366) {};
				\node[smallvertexR] (14) at (2.366,-1.366) {};
				\node[smallvertexR] (15) at (3.232,-0.866) {};
				\node[smallvertexR] (16) at (4.232,-0.866) {};
				\node[smallvertexR] (17) at (4.732,0.) {};
				\node[smallvertexR] (18) at (4.732,1.) {};
				\node[smallvertexR] (19) at (5.232,1.866) {};
				\node[smallvertexR] (20) at (4.732,2.732) {};
				\node[smallvertexR] (21) at (3.866,3.232) {};
				\node[smallvertexR] (22) at (3.366,4.098) {};
				\node[smallvertexR] (23) at (2.366,4.098) {};
				\node[smallvertexR] (24) at (1.5,3.598) {};
				\node[smallvertexR] (25) at (0.5,3.598) {};
				\node[smallvertexR] (26) at (0.,2.732) {};
				\node[smallvertexR] (27) at (0.,1.732) {};
				\node[smallvertexR] (28) at (-0.5,0.866) {};
				\node[smallvertexR] (29) at (0.,0.) {};
				\node[smallvertexR] (30) at (0.866,-0.5) {};
				\node[smallvertexR] (31) at (0.,-1.) {};
				\node[smallvertexR] (32) at (-0.5,-0.134) {};
				\node[smallvertexR] (33) at (-1.366,0.366) {};
				\node[smallvertexR] (34) at (-0.866,1.232) {};
				\node[smallvertexR] (35) at (-0.866,2.232) {};
				\node[smallvertexR] (36) at (-0.366,3.098) {};
				\node[smallvertexR] (37) at (-0.366,4.098) {};
				\node[smallvertexR] (38) at (0.634,4.098) {};
				\node[smallvertexR] (39) at (1.5,4.598) {};
				\node[smallvertexR] (40) at (2.5,4.598) {};
				\node[smallvertexR] (41) at (3.366,5.098) {};
				\node[smallvertexR] (42) at (3.866,4.232) {};
				\node[smallvertexR] (43) at (4.732,3.732) {};
				\node[smallvertexR] (44) at (5.232,2.866) {};
				\node[smallvertexR] (45) at (6.098,2.366) {};
				\node[smallvertexR] (46) at (5.598,1.5) {};
				\node[smallvertexR] (47) at (5.598,0.5) {};
				\node[smallvertexR] (48) at (5.098,-0.366) {};
				\node[smallvertexR] (49) at (5.098,-1.366) {};
				\node[smallvertexR] (50) at (4.098,-1.366) {};
				\node[smallvertexR] (51) at (3.232,-1.866) {};
				\node[smallvertexR] (52) at (2.232,-1.866) {};
				\node[smallvertexR] (53) at (1.366,-2.366) {};
				\node[smallvertexR] (54) at (0.866,-1.5) {};
				\draw[edge] (1)edge(2);
				\draw[edge] (1)edge(12);
				\draw[edge] (2)edge(3);
				\draw[edge] (3)edge(4);
				\draw[edge] (4)edge(5);
				\draw[edge] (5)edge(6);
				\draw[edge] (6)edge(7);
				\draw[edge] (7)edge(8);
				\draw[edge] (8)edge(9);
				\draw[edge] (9)edge(10);
				\draw[edge] (10)edge(11);
				\draw[edge] (11)edge(12);
				\draw[edge] (13)edge(52);
				\draw[edge] (13)edge(53);
				\draw[edge] (14)edge(51);
				\draw[edge] (15)edge(50);
				\draw[edge] (15)edge(51);
				\draw[edge] (16)edge(48);
				\draw[edge] (16)edge(49);
				\draw[edge] (17)edge(47);
				\draw[edge] (18)edge(46);
				\draw[edge] (18)edge(47);
				\draw[edge] (19)edge(44);
				\draw[edge] (19)edge(45);
				\draw[edge] (20)edge(43);
				\draw[edge] (21)edge(42);
				\draw[edge] (21)edge(43);
				\draw[edge] (22)edge(40);
				\draw[edge] (22)edge(41);
				\draw[edge] (23)edge(39);
				\draw[edge] (24)edge(38);
				\draw[edge] (24)edge(39);
				\draw[edge] (25)edge(36);
				\draw[edge] (25)edge(37);
				\draw[edge] (26)edge(35);
				\draw[edge] (27)edge(34);
				\draw[edge] (27)edge(35);
				\draw[edge] (28)edge(32);
				\draw[edge] (28)edge(33);
				\draw[edge] (29)edge(31);
				\draw[edge] (30)edge(31);
				\draw[edge] (30)edge(54);
				\draw[edge] (13)edge(14);
				\draw[edge] (13)edge(30);
				\draw[edge] (14)edge(15);
				\draw[edge] (15)edge(16);
				\draw[edge] (16)edge(17);
				\draw[edge] (17)edge(18);
				\draw[edge] (18)edge(19);
				\draw[edge] (19)edge(20);
				\draw[edge] (20)edge(21);
				\draw[edge] (21)edge(22);
				\draw[edge] (22)edge(23);
				\draw[edge] (23)edge(24);
				\draw[edge] (24)edge(25);
				\draw[edge] (25)edge(26);
				\draw[edge] (26)edge(27);
				\draw[edge] (27)edge(28);
				\draw[edge] (28)edge(29);
				\draw[edge] (29)edge(30);
				\draw[edge] (31)edge(32);
				\draw[edge] (31)edge(54);
				\draw[edge] (32)edge(33);
				\draw[edge] (33)edge(34);
				\draw[edge] (34)edge(35);
				\draw[edge] (35)edge(36);
				\draw[edge] (36)edge(37);
				\draw[edge] (37)edge(38);
				\draw[edge] (38)edge(39);
				\draw[edge] (39)edge(40);
				\draw[edge] (40)edge(41);
				\draw[edge] (41)edge(42);
				\draw[edge] (42)edge(43);
				\draw[edge] (43)edge(44);
				\draw[edge] (44)edge(45);
				\draw[edge] (45)edge(46);
				\draw[edge] (46)edge(47);
				\draw[edge] (47)edge(48);
				\draw[edge] (48)edge(49);
				\draw[edge] (49)edge(50);
				\draw[edge] (50)edge(51);
				\draw[edge] (51)edge(52);
				\draw[edge] (52)edge(53);
				\draw[edge] (53)edge(54);
				\draw[edge] (1)edge(14);
				\draw[edge] (2)edge(15);
				\draw[edge] (2)edge(16);
				\draw[edge] (2)edge(17);
				\draw[edge] (3)edge(17);
				\draw[edge] (4)edge(18);
				\draw[edge] (4)edge(20);
				\draw[edge] (5)edge(20);
				\draw[edge] (6)edge(21);
				\draw[edge] (6)edge(22);
				\draw[edge] (6)edge(23);
				\draw[edge] (7)edge(23);
				\draw[edge] (8)edge(24);
				\draw[edge] (8)edge(26);
				\draw[edge] (9)edge(26);
				\draw[edge] (10)edge(27);
				\draw[edge] (10)edge(28);
				\draw[edge] (10)edge(29);
				\draw[edge] (11)edge(29);
				\draw[edge] (12)edge(14);
				\draw[edge] (12)edge(30);
			\end{scope}
			\pgfdeclarelayer{background layer}
			\pgfsetlayers{background layer,main}
			\begin{pgfonlayer}{background layer}
			\begin{scope}[axes/.style={dashed}]
				\coordinate (o) at ($(1)!0.5!(7)$);
				\draw[axes]  (o) -- ($(o)!2.3!(4)$);
				\draw[axes] (o) -- ($(o)!2.3!(8)$);
				\draw[axes] (o) -- ($(o)!2.3!(12)$);
			\end{scope}
			\end{pgfonlayer}
		\end{tikzpicture}
  \caption{A symmetric graph having a $3$-fold rotationally symmetric flexible placement.}%
  \label{fig:star}
\end{figure}
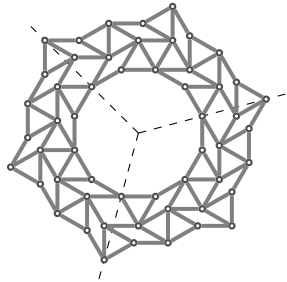

Symmetry plays an important role in art and design, and often appears in nature also.
Due to this, there is a large body of work focused on symmetric frameworks and their properties in the context of rigidity theory;
see \cite{gainsparserigid,OwenPower2012}.
In particular,
we shall be focusing on graphs and frameworks that display $n$-fold rotational symmetry, such as in Figure~\ref{fig:star}.

In this paper we formalise the NAC-colouring method for rotationally symmetric flexible placements,
in such a way that the motion preserves the symmetry.
By combining Lemma \ref{lem:flexibleImpliesNAC} and \ref{lem:NACImpliesflexible} of Section \ref{sec:nfold},
we obtain the following result:

\begin{theorem}\label{thm:main}
	A $\Cn$-symmetric connected graph has a $\Cn$-symmetric NAC-colour\-ing if and only if 
	it has a $\Cn$-symmetric flexible placement in $\RR^2$.
\end{theorem}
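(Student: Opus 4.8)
The statement is the conjunction of the two implications established in Lemmas~\ref{lem:flexibleImpliesNAC} and~\ref{lem:NACImpliesflexible}, so the plan is to prove each direction separately. Throughout I identify $\RR^2$ with $\CC$ and place the centre of the rotation at the origin, so that the generator of $\Cn$ acts on the plane as multiplication by $\zeta = e^{2\pi \ci/n}$ and on the graph as an automorphism $\sigma$ of order $n$; a placement $p$ is then $\Cn$-symmetric precisely when $p(\sigma v) = \zeta\, p(v)$ for every vertex $v$. The guiding principle in both directions is the classical correspondence in which the two colours of a NAC-colouring record the two complex directions $z_v = x_v + \ci y_v$ and $w_v = x_v - \ci y_v$, linked by the fact that an edge $uv$ has constant length along a motion if and only if $(z_u-z_v)(w_u-w_v)$ is constant.

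For the direction NAC $\Rightarrow$ flexible (Lemma~\ref{lem:NACImpliesflexible}) I would use the grid construction. Given a $\Cn$-symmetric NAC-colouring, index the red components (maximal connected subgraphs using only red edges) and the blue components, and seek complex numbers $\alpha_i$ for each red component and $\beta_j$ for each blue component, then set
\[
  p_t(v) = \alpha_{r(v)}\, t + \beta_{b(v)}\, t^{-1},
\]
where $r(v),b(v)$ denote the red and blue components of $v$ and $t$ ranges over the unit circle. Since a red edge fixes $r(\cdot)$ and a blue edge fixes $b(\cdot)$, every edge vector is a constant multiple of $t$ or of $t^{-1}$, so all edge lengths stay constant; the no-almost-cycle property is exactly what forces a blue edge to join two \emph{distinct} red components and vice versa, so that generic choices of $\alpha_i,\beta_j$ keep every edge non-degenerate and the motion non-trivial. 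To make each $p_t$ itself $\Cn$-symmetric I impose the equivariance $\alpha_{\sigma(i)} = \zeta\,\alpha_i$ and $\beta_{\sigma(j)} = \zeta\,\beta_j$, which yields $p_t(\sigma v) = \zeta\, p_t(v)$ for all $t$ simultaneously; the content of the extra symmetry condition in the definition is precisely that this equivariant system admits a non-degenerate solution.

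For the direction flexible $\Rightarrow$ NAC (Lemma~\ref{lem:flexibleImpliesNAC}) I would adapt the non-symmetric argument of \cite{flexibleLabelings}: the flex spans a one-dimensional algebraic set $C$, and viewing $z_v,w_v$ in its function field and choosing a suitable place (discrete valuation) $\nu$, one colours each edge by the sign of $\nu(z_u-z_v)$, using $\nu(z_u-z_v) = -\nu(w_u-w_v)$. The NAC condition follows from the ultrametric inequality applied to the cycle relation $\sum (z_u-z_v)=0$, and surjectivity from the non-triviality of the flex. The genuinely new ingredient is symmetry: because every placement is $\Cn$-symmetric we have $z_{\sigma v} = \zeta z_v$, hence $z_{\sigma u}-z_{\sigma v} = \zeta(z_u-z_v)$, and since $\zeta$ is a nonzero constant with $\nu(\zeta)=0$ the valuations, and therefore the induced colouring, are automatically $\sigma$-invariant; one then checks that the additional symmetry property is inherited as well.

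The main obstacle sits in the NAC $\Rightarrow$ flexible direction. Iterating $\alpha_{\sigma(i)} = \zeta\,\alpha_i$ around a $\sigma$-orbit of red components of length $m$ gives $\alpha_i = \zeta^{m}\alpha_i$, which is incompatible with $\alpha_i\neq 0$ unless $m=n$, as $\zeta$ is a primitive $n$-th root of unity. Thus a monochromatic component fixed by a non-trivial power of $\sigma$ can only be assigned the value $0$, i.e.\ it must be placed at the rotation centre, and the purpose of the additional symmetry property is exactly to govern these fixed components so that the resulting zero-constraints are mutually compatible and a non-degenerate, genuinely flexing equivariant assignment still exists. Verifying this—together with fixing the rotation centre correctly and choosing the remaining free parameters generically so that no edge collapses—is the step I expect to demand the most work.
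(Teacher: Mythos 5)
Your two-lemma decomposition and both techniques coincide with the paper's: valuation theory (Chevalley's theorem applied to the function field of an algebraic curve of placements) for the direction of Lemma~\ref{lem:flexibleImpliesNAC}, and an equivariant version of the grid construction for Lemma~\ref{lem:NACImpliesflexible}. In the latter direction your outline is essentially the paper's proof: the paper's maps $\overline{a},\overline{b}$ send every partially invariant monochromatic component to the origin --- precisely your observation that a $\sigma$-orbit of components of length $m<n$ forces the value $0$ --- and the verification you defer as ``the most work'' is in fact short once the extra condition of Definition~\ref{defn:NACnfold} is invoked: an edge of one colour can degenerate only if the two (distinct, by the NAC property) components of the other colour containing its endpoints are both assigned $0$, i.e.\ are both partially invariant, which is exactly what the definition forbids; genericity of the orbit representatives (the paper's conditions $a_j\neq\tau(\omega)^i a_{j'}$, etc.) rules out all other coincidences of adjacent vertices.

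The genuine gap is in the direction of Lemma~\ref{lem:flexibleImpliesNAC}. You verify only the first requirement of Definition~\ref{defn:NACnfold}, the invariance $\delta(\gamma e)=\delta(e)$, and compress the second --- that no two distinct partially invariant components of the same colour are joined by an edge --- into ``one then checks''. That clause is the actual new content of the lemma and does not follow from invariance of the colouring. The missing argument, in the paper: suppose distinct partially invariant red components $H_1,H_2$ are joined by an edge $v_1v_2$, necessarily blue. If $v_j$ is not invariant, partial invariance gives $\omega^{k_j}v_j\in H_j$ for some $1\leq k_j<n$; summing $\Wfun{u}{v}$ along a red path in $H_j$ from $v_j$ to $\omega^{k_j}v_j$ telescopes to $(1-e^{-2\pi k_j \ci/n})(x_{v_j}+\ci y_{v_j})$, so the ultrametric inequality yields $\nu(x_{v_j}+\ci y_{v_j})>\alpha$, where $\alpha$ is the threshold defining the colouring; note this is your identity $z_{\sigma v}=\zeta z_v$ applied to a vertex position rather than an edge vector, which is where partial invariance actually enters and which your sketch never uses. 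If $v_j$ is invariant it is placed at the origin, so $x_{v_j}+\ci y_{v_j}=0$, and the hypothesis that invariant vertices form an independent set guarantees $v_1,v_2$ are not both invariant. In every case $\nu(\Wfun{v_1}{v_2})>\alpha$, so $v_1v_2$ would be red, a contradiction. Moreover, this argument only treats red components: for the blue ones you must run it on the conjugate colouring, which requires knowing that the conjugate colouring is itself realized by some valuation and threshold (\cite[Lemma~2.13]{movableGraphs}); your remark that $\nu(z_u-z_v)=-\nu(w_u-w_v)$ gestures in this direction but is not by itself that statement.
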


Similiar to \cite{movableGraphs}, 
we also identify properties of such NAC-colourings that determine when all flexible placements must have overlapping vertices.

\section{Rigid and flexible frameworks}
We briefly recall some basic notions from rigidity theory.
A \emph{framework in $\RR^2$} is a pair $(G,p)$
where $G$ is a (finite simple) graph and $p : V(G) \rightarrow \RR^2$
is a \emph{placement} of $G$, a possibly non-injective map such $p(u) \neq p(v)$ if $uv \in E(G)$. 
We define frameworks $(G,p)$ and $(G,q)$ to be \emph{equivalent} if
for all $uv \in E(G)$,
\begin{align}\label{eq:equivalence}
	\| p(u) - p(v) \| = \| q(u) - q(v)\|\,.
\end{align}
We define two placements $p,q$ of $G$ to be \emph{congruent} if \eqref{eq:equivalence} holds for all $u,v \in V(G)$;
equivalently, $p$ and $q$ are congruent if there exists a Euclidean isometry~$M$ of $\RR^2$ such that $M q(v) = p(v)$ for all $v \in V(G)$.

A \emph{flex (in $\RR^2$)} of the framework $(G,p)$ is a continuous path $t \mapsto p_t$, $t \in [0,1]$,
in the space of placements of $G$ such that $p_0= p$ and each $(G,p_t)$ is equivalent to~$(G,p)$.
If $p_t$ is congruent to $p$ for all $t \in [0,1]$ then $p_t$ is \emph{trivial}.
We define~$(G,p)$ to be \emph{flexible} if there is a non-trivial flex of $(G,p)$ in~$\RR^2$,
and \emph{rigid} otherwise.

It was shown in \cite{Geiringer1927} that a framework $(G,p)$ with a generic placement of vertices (see~\cite{GraverServatius}) is rigid if and only if
$G$ contains a Laman graph as a spanning subgraph.
This does not inform us whether a graph will have a flexible placement;
for example,
any generic placement of $K_{4,4}$ is rigid, however as shown by Figure~\ref{fig:symmetricDixon} we can construct flexible placements for it.
To determine whether a graph has flexible placements we introduce the following.

\begin{definition}
	An edge colouring $\delta: E(G) \rightarrow \{\red{}, \blue{}\}$ of a graph $G$ is
	a \emph{NAC-colouring} 
	if $\delta(E(G)) = \{ \red{},\blue{} \}$
	and for each cycle in $G$, either all edges have the same colour, or there are at least two red and two blue edges.
	NAC-colourings $\delta,\conjugate{\delta}$ of $G$ are \emph{conjugate} if $\delta(e)\neq\conjugate{\delta}(e)$ for all $e\in E(G)$.
\end{definition}
The colourings considered within this paper are not required to have incident edges coloured differently,
contrary to common graph-theoretical terminology.
Having these definitions, we can recall the result \cite[Theorem 3.1]{flexibleLabelings}.
\begin{theorem}
	A connected graph has a flexible placement in $\RR^2$ if and only if it has a NAC-colouring.
\end{theorem}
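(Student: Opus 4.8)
The plan is to prove the stated equivalence (Theorem~3.1 of \cite{flexibleLabelings}) in both directions, using the standard ``Cayley--Menger'' idea of lifting a flex to a one-parameter family and extracting a colouring from the failure of full rigidity, and conversely building an explicit motion from a NAC-colouring.

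\paragraph{Flexible $\Rightarrow$ NAC-colouring.}
First I would suppose $(G,p)$ admits a non-trivial flex $t\mapsto p_t$ and seek to manufacture a surjective $2$-colouring of $E(G)$ from it. The key algebraic device is to complexify: instead of working with $\|p(u)-p(v)\|^2$ directly, pass to the factorisation
$\|x-y\|^2 = \big((x_1-y_1)+\ci(x_2-y_2)\big)\big((x_1-y_1)-\ci(x_2-y_2)\big)$,
which splits each squared length into a product of two ``isotropic'' linear forms $W=z_u-z_v$ and $\conjugate{W}=\conjugate{z_u}-\conjugate{z_v}$ in the coordinates $z_v=p(v)_1+\ci\,p(v)_2$. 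A flex forces each product $W_{uv}\cdot\conjugate{W}_{uv}$ to stay constant along $t$; differentiating and exploiting that the flex is non-trivial yields a nonconstant behaviour that can be recorded edge-by-edge. The approach I would follow is the one in \cite{flexibleLabelings}: extend the flex to a curve over $\CC$ (or a formal/Puiseux one-parameter family), take a suitable valuation/order-of-vanishing of the isotropic coordinates $W_{uv}$ along the curve, and define $\delta(uv)=\red$ or $\blue$ according to whether the ``$W$-part'' or the ``$\conjugate{W}$-part'' dominates. The cycle condition then falls out because around any cycle the $W_{uv}$ sum telescopes to zero, so the dominant terms cannot cancel unless either all edges agree (monochromatic) or there are at least two edges of each colour to allow the leading orders to balance; surjectivity is exactly the statement that the flex is non-trivial (a trivial flex gives a constant valuation, i.e.\ a monochromatic ``colouring'', which is excluded from being a NAC-colouring).

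\paragraph{NAC-colouring $\Rightarrow$ flexible.}
For the converse I would take a NAC-colouring $\delta$ and \emph{construct} a flex explicitly, which is the more hands-on half. The idea is to use $\delta$ to define a ``zig-zag'' placement: contract all blue edges and all red edges separately to obtain two quotient graphs $G/\!\!\sim_\red$ and $G/\!\!\sim_\blue$, place the red-components along one direction and the blue-components along another, and then rotate the two families against each other by an angle parameter $t$. Concretely, assign to each vertex $v$ a pair $(a(v),b(v))\in\RR^2$, where $a$ is constant on red-connected components and $b$ is constant on blue-connected components, and set $p_t(v)=a(v)\,e^{\ci t}+b(v)\,e^{-\ci t}$ after identifying $\RR^2\cong\CC$. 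Along a red edge $uv$ we have $a(u)=a(v)$, so $p_t(u)-p_t(v)=(b(u)-b(v))e^{-\ci t}$ has constant modulus; along a blue edge the roles swap; so every edge length is preserved for all $t$. The main thing to check is that such nonconstant $a,b$ exist with $p_0$ a valid placement and the flex non-trivial, and here the NAC cycle condition is exactly what guarantees consistency: the ``no almost cycle'' property ensures the two contracted quotient graphs are genuinely ``independent'' directions so that generic choices of $a,b$ give a non-degenerate, non-trivial motion rather than a collapse or a rigid rotation.

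\paragraph{Main obstacle.}
I expect the forward direction to be the delicate part. Turning an analytic flex into a combinatorial colouring requires controlling how the isotropic coordinates degenerate, and the honest way to do this is to work with an algebraic curve of placements and a valuation on its function field, then argue that the induced edge labelling satisfies the cycle condition; verifying that the two colour classes are each nonempty (surjectivity) precisely when the flex is non-trivial is where the argument must be tightest. The converse, by contrast, is essentially the verification that the displayed formula $p_t(v)=a(v)e^{\ci t}+b(v)e^{-\ci t}$ preserves red and blue edge lengths and is non-trivial, together with an existence argument for suitable $a,b$ drawn from the components of the two monochromatic contractions.
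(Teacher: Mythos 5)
Your proposal takes essentially the same route as the paper's: the forward direction via an algebraic curve of placements, the isotropic coordinates $\Wfun{u}{v}$, $\Zfun{u}{v}$, and a valuation with a threshold separating \red{} from \blue{} (precisely the argument from \cite{flexibleLabelings} that the paper reuses in Lemma~\ref{lem:flexibleImpliesNAC}), and the converse via the zigzag construction $p_t(v)=R(t)\overline{a}(v)+\overline{b}(v)$, of which your $p_t(v)=a(v)e^{\ci t}+b(v)e^{-\ci t}$ is just a globally rotated variant (cf.\ Lemma~\ref{lem:NACImpliesflexible}). Both halves are sound in outline, matching the cited proof's structure.
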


\section{Rotational symmetry}
We now recall some of the basics of rotational symmetry for graphs and frameworks.
Let $G$ be a graph and $n \geq 2$. Let the group $\Cn := \left\langle \omega : \omega^n =1 \right\rangle$
act on $G$, i.e., there exists an injective group homomorphism $\theta: \Cn \rightarrow \Aut(G)$.
We define $\gamma v := \theta(\gamma)(v)$ for $\gamma\in\Cn$;
similarly, for any edge $e=uv\in E(G)$, we define $\gamma e := \gamma u \gamma v$.
We shall define $v \in V(G)$ to be an \emph{invariant vertex} if $\gamma v =v$ for all $\gamma \in \Cn$,
and \emph{partially invariant} if $\gamma v =v$ for some $\gamma \in \Cn, \gamma\neq 1$.
The graph $G$ is called \emph{$\Cn$-symmetric} if in addition we have that every partially invariant vertex is invariant,
and the set of invariant vertices of $G$ forms an independent set.

A placement $p$ of a $\Cn$-symmetric graph $G$ in $\RR^2$ is \emph{$\Cn$-symmetric} 
if $p(\gamma v ) = \tau(\gamma)p(v)$ for each $v \in V(G)$ and $\gamma = \omega^k\in\Cn$,
where $\tau(\omega^k)$ is the $2 k \pi/n$ rotation matrix;
likewise, we define the pair $(G,p)$ to be a \emph{$\Cn$-symmetric framework}.
Note that the invariant vertices are necessarily placed at the origin, 
which explains the requirement to form an independent set. 
If there is a non-trivial flex $p_t$ of $(G,p)$ such that each $(G,p_t)$ is $\Cn$-symmetric,
then $(G,p)$ is \emph{$\Cn$-symmetric flexible} (or \emph{n-fold rotation symmetric flexible}),
and \emph{$\Cn$-symmetric rigid} otherwise.

We wish to extend our definition of NAC-colourings to the symmetric case.
Let $\delta$ be a colouring of $G$.
A \emph{\red{} component} is a connected component of 
\begin{equation*}
	G^\delta_{\red{}} := \left( V(G), \{ e \in E(G) : \delta(e) = \red{} \} \right)\,.
\end{equation*}
A \red{} component $H \subset G$ is \emph{partially invariant} if there exists $\gamma \in \Cn \setminus \{1\}$ such that $\gamma H = H$,
and \emph{invariant} if $\gamma H = H$ for all $\gamma \in \Cn$ (see Figure~\ref{fig:invariant} for an example).
We define $G^\delta_{\blue{}}$ and \blue{} (partially invariant) components analogously.

\begin{figure}[ht]
	\centering
	  \begin{tikzpicture}[scale=1]
	    \begin{scope}
	      \node[vertex] (1) at (0:1) {};
				\node[vertex] (2) at (60:1) {};
				\node[vertex] (3) at (120:1) {};
				\node[vertex] (4) at (180:1) {};
				\node[vertex] (5) at (240:1) {};
				\node[vertex] (6) at (300:1) {};
	      \draw[edge] (1)edge(2) (2)edge(3) (3)edge(4) (4)edge(5) (5)edge(6) (6)edge(1);
	      \draw[edge,colR] (1)edge(3) (3)edge(5) (5)edge(1);
	      \draw[edge] (2)edge(4) (4)edge(6) (6)edge(2);
	    \end{scope}
	    \begin{scope}[xshift=4cm]
	      \node[vertex] (1) at (0:1) {};
				\node[vertex] (2) at (60:1) {};
				\node[vertex] (3) at (120:1) {};
				\node[vertex] (4) at (180:1) {};
				\node[vertex] (5) at (240:1) {};
				\node[vertex] (6) at (300:1) {};
	      \draw[edge,colR] (1)edge(2) (2)edge(3) (3)edge(4) (4)edge(5) (5)edge(6) (6)edge(1);
	      \draw[edge] (1)edge(3) (3)edge(5) (5)edge(1);
	      \draw[edge] (2)edge(4) (4)edge(6) (6)edge(2);
	    \end{scope}
	  \end{tikzpicture}
  \caption{A partially invariant (but not invariant) red component on the left and an invariant red component (and therefore also partially invariant) on the right for $\C_6$-symmetry.
  The symmetry is indicated by the graph layout.}
  \label{fig:invariant}
\end{figure}
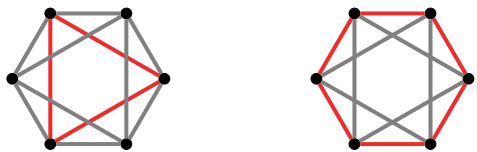

We focus on the class of NAC-colourings suitable for dealing with symmetries.

\begin{definition}\label{defn:NACnfold}
	We define a NAC-colouring $\delta$ of a $\Cn$-symmetric graph $G$ to be \emph{$\Cn$-symmetric} if
	$\delta(\gamma e) = \delta(e)$ for all $e \in E(G)$ and $\gamma \in \Cn$ and
	no two distinct \blue{}, resp.\ \red{}, partially invariant components are connected by an edge.
\end{definition}

\section{Proof of Theorem \ref{thm:main}}\label{sec:nfold}

We now prove our two key lemmas for Theorem \ref{thm:main}.
The first result (Lemma~\ref{lem:flexibleImpliesNAC}) uses tools from valuation theory;
we refer the reader to \cite{Deuring} for background reading.

Let $(G,p)$ be a $\Cn$-symmetric framework in $\RR^2$ and fix a non-invariant vertex~$v_0$ that does not lie at the origin.
By rotating $(G,p)$ we may assume that $p(v_0)$ lies on the line $\{(t,0) : t \in \RR \}$.
Define for each edge $vw$ the polynomials
\begin{align*}
	\Wfun{u}{v} := (x_u - x_v) + i (y_u - y_v), \qquad
	\Zfun{u}{v} := (x_u - x_v) - i (y_u - y_v).
\end{align*}
Using these we define the algebraic set $\mathcal{V}_n(G,p)$ in $(\CC^2)^{V(G)}$ by the equations
\begin{eqnarray*}
	y_{v_0} &=& 0, \\
	x_{\omega v} &=& \cos (2\pi/n) x_v + \sin (2 \pi/n) y_v \text{ for all } v \in V(G), \\ 
	y_{\omega v} &=& -\sin (2\pi/n) x_v + \cos (2 \pi/n) y_v \text{ for all } v \in V(G), \\
	\Wfun{u}{v} \Zfun{u}{v} &=& (x_u - x_v)^2 + (y_u - y_v)^2 = \|p_u - p_v \|^2 \text{ for all } uv \in E(G).
\end{eqnarray*}
If $G$ is connected and $(G,p)$ is $\Cn$-symmetric flexible then the algebraic set $\mathcal{V}_n(G,p)$ cannot be finite and must contain an irreducible algebraic curve $C$.
Further,
as fixing the vertex $v_0$ to the line $\{(t,0) : t \in \RR \}$ will remove any trivial flexes,
the ``angle'' between two edges must vary,
i.e., for some edges $u'v',u'w'$,
both $\Wfun{u'}{v'}/\Wfun{u'}{w'}$ and $\Zfun{u'}{v'}/\Zfun{u'}{w'}$ take an infinite amount of values over $C$.
We shall define the irreducible algebraic curve $C$ to be a \emph{$\Cn$-symmetric motion}.

\begin{lemma}\label{lem:flexibleImpliesNAC}
	If $(G,p)$ is a $\Cn$-symmetric flexible framework in $\RR^2$,  $G$ being a connected graph,
	then $G$ has a $\Cn$-symmetric NAC-colouring.
\end{lemma}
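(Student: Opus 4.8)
The plan is to mimic the valuation-theoretic proof of the non-symmetric result (\cite[Theorem 3.1]{flexibleLabelings}), while carrying the $\Cn$-action through every step. I would start from the $\Cn$-symmetric motion $C$, the irreducible algebraic curve inside $\mathcal{V}_n(G,p)$ guaranteed by the preamble. The key structural fact I want is a place (discrete valuation) on the function field $\KK = \CC(C)$ at which the relevant $\Wfun{u}{v}$ and $\Zfun{u}{v}$ behave non-trivially. Concretely, since some ratio $\Wfun{u'}{v'}/\Wfun{u'}{w'}$ is non-constant on $C$, its valuation is nonzero at some place $\nu$ of $\KK$; this is exactly the mechanism that \cite{flexibleLabelings} uses to define a colouring. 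I would colour an edge $uv$ \red{} or \blue{} according to whether $\nu(\Wfun{u}{v}) > \nu(\Zfun{u}{v})$ or $\nu(\Wfun{u}{v}) < \nu(\Zfun{u}{v})$ (after a convenient normalisation so that the two valuations differ for at least one edge, which the non-constant ratio provides). Because $\Wfun{u}{v}\Zfun{u}{v} = \|p_u - p_v\|^2$ is a nonzero constant on $C$, the two valuations sum to $0$, so they can never be equal, and the colouring is well defined and surjective.

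Next I would verify the two defining conditions of a $\Cn$-symmetric NAC-colouring. That the edge colouring $\delta$ is a NAC-colouring in the ordinary sense is inherited verbatim from the argument in \cite{flexibleLabelings}: around any cycle the telescoping sum $\sum \Wfun{u}{v} = 0$ (and likewise for $Z$) forces the minimum valuation on each colour class to be attained at least twice, which is precisely the ``two edges of each colour or monochromatic'' condition. The genuinely new work is symmetry. For $\delta(\gamma e) = \delta(e)$ I would exploit that the rotation equations defining $\mathcal{V}_n(G,p)$ make $\Wfun{\gamma u}{\gamma v}$ a scalar multiple of $\Wfun{u}{v}$ by the fixed constant $e^{\mp 2\pi i k/n}$ (the complexified rotation acts diagonally on the $W$- and $Z$-coordinates), and these unit scalars have valuation $0$. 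Hence $\nu(\Wfun{\gamma u}{\gamma v}) = \nu(\Wfun{u}{v})$ and similarly for $Z$, so applying $\gamma$ to an edge preserves the sign of $\nu(\Wfun{}{}) - \nu(\Zfun{}{})$ and therefore the colour.

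The main obstacle I anticipate is the second, more delicate symmetry axiom from Definition~\ref{defn:NACnfold}: that no two distinct \red{} (resp.\ \blue{}) partially invariant components are joined by an edge. Edges within a single \red{} component are \red{} by definition, so the content is about a \blue{} edge $e$ bridging two distinct \red{} partially invariant components $H_1, H_2$. I would rule this out by a valuation/stabiliser argument: a partially invariant \red{} component $H$ is fixed setwise by some $\gamma \neq 1$, and I would analyse the orbit of the bridging edge $e$ under the cyclic subgroup $\langle\gamma\rangle$ together with the already-established facts that $\nu(\Wfun{}{}) = -\nu(\Zfun{}{})$ on each edge and that $\gamma$ fixes valuations. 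The aim is to show that the symmetry forces a short cycle through $e$ and its $\gamma$-images whose valuation pattern would violate the NAC condition already proved, giving a contradiction. Making this orbit-and-cycle bookkeeping precise — especially handling the possibility that $H_1$ and $H_2$ are swapped or cyclically permuted by elements of $\Cn$, and treating invariant vertices placed at the origin where some $\Wfun{}{}$ or $\Zfun{}{}$ may vanish — is where the real care is required, and I would isolate it as the crux of the proof.
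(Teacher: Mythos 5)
There are two genuine gaps. The first is in your colouring rule. From $\Wfun{u}{v}\Zfun{u}{v}=\|p(u)-p(v)\|^2\in\CC\setminus\{0\}$ you correctly deduce $\nu(\Wfun{u}{v})+\nu(\Zfun{u}{v})=0$, but this does \emph{not} imply the two valuations are unequal: they can both be zero, and nothing in the setup forbids this for any given edge, so under your strict dichotomy such edges receive no colour at all. No normalisation rescues this, since the $\Wfun{u}{v}$ are fixed elements of $\CC(C)$; dividing all of them by a reference edge's $W$ merely transfers the problem to that edge, whose valuation difference then equals zero. The paper instead uses a threshold rule: with $\alpha:=\nu(\Wfun{u'}{w'})<\nu(\Wfun{u'}{v'})$ (supplied by Chevalley's theorem), an edge $uv$ is \red{} if $\nu(\Wfun{u}{v})>\alpha$ and \blue{} if $\nu(\Wfun{u}{v})\leq\alpha$; this colours every edge, is surjective by the choice of $\alpha$, and is a NAC-colouring by \cite[Theorem~2.8]{movableGraphs}. (Your equivariance step --- $\Wfun{\omega^k u}{\omega^k v}=e^{-2\pi k i/n}\Wfun{u}{v}$ and unit constants have valuation zero --- is correct, is exactly the paper's argument, and survives the change of rule.)

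The second gap is the part you yourself flag as the crux: the condition of Definition~\ref{defn:NACnfold} that no two distinct partially invariant \red{} components are joined by an edge is not proved, and the mechanism you propose would fail. A cycle consisting of the \blue{} bridging edge $v_1v_2$, a \red{} path in $H_1$ from $v_1$ to $\gamma v_1$, the \blue{} edge $\gamma v_1\,\gamma v_2$, and a \red{} path in $H_2$ back to $v_2$ contains exactly two \blue{} edges, which the NAC condition permits, so no combinatorial violation arises (moreover the stabilisers of $H_1$ and $H_2$ may be generated by different powers of $\omega$, so such a cycle need not even close up). The paper's argument is valuational rather than combinatorial: along a \red{} path $(u_1,\dots,u_m)$ in $H_1$ from $v_1$ to $\omega^{k_1}v_1$ the sum $\sum_s \Wfun{u_s}{u_{s+1}}$ telescopes to $(1-e^{-2\pi k_1 i/n})W_{v_1}$, where $W_{v_1}:=x_{v_1}+iy_{v_1}$; since every summand has valuation $>\alpha$, this gives $\nu(W_{v_1})>\alpha$, and likewise $\nu(W_{v_2})>\alpha$, whence $\nu(\Wfun{v_1}{v_2})=\nu(W_{v_1}-W_{v_2})>\alpha$, i.e.\ the bridging edge is \red{} --- contradicting that an edge between distinct \red{} components must be \blue{}. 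Invariant endpoints are handled by noting they lie at the origin (so $W=0$) and cannot both be invariant because invariant vertices form an independent set. Finally, you only treat \red{} components; the \blue{} case requires the conjugate NAC-colouring obtained from a conjugate valuation \cite[Lemma~2.13]{movableGraphs}, a step absent from your sketch (though under a corrected symmetric rule it could follow by exchanging the roles of $W$ and $Z$).
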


\begin{proof}
	The assumption implies that there exists a $\Cn$-symmetric motion $C$ with infinitely many real points.
	Let $u'v'$ and $u'w'$ be edges of $G$ such that the function $\Wfun{u'}{v'}/\Wfun{u'}{w'}$ takes infinitely many values.
	Then $\Wfun{u'}{v'}/\Wfun{u'}{w'}$ is a transcendental element of the function field $\CC(C)$.
	By Chevalley's Theorem (see for instance~\cite{Deuring}), there exists a valuation~$\nu$ of $\CC(C)$
	such that $0<\nu\left(\Wfun{u'}{v'}/\Wfun{u'}{w'}\right)$ and $\nu(\CC)=\{0\}$.
	Hence, we have $\alpha:=\nu(\Wfun{u'}{w'}) < \nu(\Wfun{u'}{v'})$.

	Define the colouring $\delta :E(G) \rightarrow \{ \red{}, \blue{} \}$ where for each $e =uv \in E(G)$,
	\begin{align}	\label{eq:valuation2NAC}
		\delta(e) :=
		\begin{cases}
			\red{}  &\text{ if } \nu(\Wfun{u}{v}) > \alpha \\
			\blue{} &\text{ if } \nu(\Wfun{u}{v}) \leq \alpha.
		\end{cases}
	\end{align}
	By \cite[Theorem 2.8]{movableGraphs}, we see that $\delta$ is a NAC-colouring.
	Moreover, $\delta( \omega^k e) = \delta (e)$ for all $e=uv \in E(G)$,
	since
	\begin{align*}
		\nu \left( \Wfun{ \omega^k u}{\omega^k v} \right) 
		= \nu \left(e^{\frac{-2\pi k i}{n}} \Wfun{u}{v} \right) 
		= \nu \left(e^{\frac{-2\pi k i}{n}}\right) + \nu \left( \Wfun{u}{v} \right)
		= \nu \left( \Wfun{u}{v} \right).
	\end{align*}

	It suffices to show that two distinct partially invariant \red{} components
	cannot be connected by an edge,
	since there exists a valuation $\conjugate{\nu}$ and threshold $\conjugate{\alpha}$
	yielding a NAC-colouring $\conjugate{\delta}$ conjugate to $\delta$ by \cite[Lemma~2.13]{movableGraphs}.
	Suppose for contradiction that $H_1$ and $H_2$ are distinct
	partially invariant \red{} components of $G$ that are connected
	by an edge $v_1 v_2$ with $v_j \in V(H_j)$.
	As each $H_j$ is a \red{} component and they are distinct, $\delta(v_1 v_2) = \blue{}$.
	Suppose that $v_1$ and $v_2$ are not invariant vertices.
	Due to partial invariance there exist $\omega^{k_j} v_j$ in $H_j$ for some $1 \leq k_j <n$.
	Let $(u_1, u_2 \ldots, u_m)$ be a path in $H_1$ with $u_1 = v_1$ and $u_m = \omega^{k_1} v_1$, then
	\begin{align*}
		\sum_{s = 1}^{m-1} \Wfun{u_s}{u_{s+1}} = (1 - e^{\frac{-2\pi k_1 i}{n}}) \left( x_{v_1} + i y_{v_1} \right).
	\end{align*}
	If we define $W_{v_1} :=  x_{v_1} + i y_{v_1} \in \CC(C)$ then as $(1 - e^{\frac{-2\pi k_1 i}{n}}) \in \CC$,
	\begin{align*}
		\nu (W_{v_1}) = \nu\left(\sum_{s = 1}^{m-1} \Wfun{u_j}{u_{j+1}}\right) \geq \min_{s=1, \ldots,m-1} \nu \left(\Wfun{u_s}{u_{s+1}} \right) > \alpha.
	\end{align*}
	By a similar method, if we define $W_{v_2} :=  x_{v_2} + i y_{v_2} \in \CC(C)$ then $\nu (W_{v_2}) >\alpha$.
	The vertices $v_1$ and $v_2$ cannot be both invariant, since by definition,
	invariant vertices form an independent set.
	If $v_j$ is invariant, then it must be at the origin and hence $W_{v_j}=0$.
	In any case, the edge $v_1 v_2$ cannot be \blue{} as either
		$\nu (\Wfun{v_1}{v_2}) = \nu\left( W_{v_1} - W_{v_2} \right)
		\geq \min\{ \nu (W_{v_1}), \nu (W_{v_2})\}  > \alpha$
		if $v_1,v_2$ are not invariant,
		or $\nu (\Wfun{v_1}{v_2}) = \nu\left( W_{v_j} \right) > \alpha$ if $v_j \in \{v_1,v_2\}$ is not invariant.
\end{proof}

\begin{lemma}\label{lem:NACImpliesflexible}
	If a $\Cn$-symmetric connected graph $G$ has a $\Cn$-symmetric NAC-colouring~$\delta$,
	then there exists a $\Cn$-symmetric flexible framework $(G,p)$ in $\RR^2$.  
\end{lemma}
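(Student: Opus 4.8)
The plan is to construct an explicit $\Cn$-symmetric flexible framework from the given $\Cn$-symmetric NAC-colouring $\delta$ by adapting the classical NAC-colouring construction to respect the rotational symmetry. Recall that in the non-symmetric setting (the cited \cite[Theorem 3.1]{flexibleLabelings}), a NAC-colouring yields a motion by contracting each red component to a point and each blue component to a point, thereby mapping $G$ into a ``grid'' $\RR \times \RR$, and then applying a one-parameter family of linear maps $(a,b) \mapsto (a, t b)$ or rather a rotation-by-$t$ in each factor. Concretely, one assigns to each vertex $v$ a pair $(R(v), B(v))$ where $R(v)$ indexes the red component and $B(v)$ the blue component containing $v$, places the red-grid directions and blue-grid directions along two families of parallel lines, and the edge lengths depend only on these grid coordinates, which are preserved under the deformation. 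My first step is to recall this construction precisely and set up the quotient: the red components are the classes that stay rigid in one direction, the blue in the other.

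The key new ingredient is to make the grid placement equivariant under $\Cn$. Because $\delta(\gamma e) = \delta(e)$ for all $\gamma \in \Cn$, the group $\Cn$ permutes the red components among themselves and the blue components among themselves. First I would let $\Cn$ act on the set of red components and on the set of blue components, and build a placement of the quotient grid that is $\Cn$-equivariant: I want to assign to each red component a point in $\RR^2$ (and each blue component a point) so that $\gamma$ acts by the rotation $\tau(\gamma)$. The subtle point is exactly what the definition of $\Cn$-symmetric NAC-colouring guards against: the condition that no two distinct partially invariant red (resp.\ blue) components are joined by an edge. A partially invariant component must, under the symmetric placement, have its ``grid coordinate'' fixed by a nontrivial rotation, forcing it to the origin; if two distinct such components were adjacent, the connecting edge would collapse to length zero, destroying the placement. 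So I would place every partially invariant red component at the origin of the red factor and every partially invariant blue component at the origin of the blue factor, place the remaining components in free orbits at generic radii, and let the deformation parameter $t$ rotate the two factors independently while keeping the origin fixed — the origin being the only $\Cn$-fixed point of $\RR^2$.

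Concretely, I would define $p_t(v)$ as a sum of a ``red part'' and a ``blue part'': the red part depends only on the red component of $v$ and is rotated by an angle governed by $t$, the blue part depends only on the blue component and is rotated by the conjugate angle; invariant and partially invariant components contribute zero (they sit at the origin), so that $p_t$ is genuinely well-defined on vertices and $\Cn$-equivariant, i.e.\ $p_t(\gamma v) = \tau(\gamma) p_t(v)$. The edge-length preservation follows exactly as in the unsymmetric construction, because for a red edge only the blue coordinates differ between endpoints (and conversely), and the rotating deformation is an isometry on each factor. I would then verify that the flex is non-trivial by invoking the second defining property of NAC-colourings / the surjectivity $\delta(E(G)) = \{\red,\blue\}$ together with the ``angle varies'' characterisation: since both colours occur and $G$ is connected, some pair of edges changes relative angle as $t$ varies.

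The main obstacle will be the equivariance bookkeeping on components that are fixed by the group action, and showing no edge degenerates. I expect the heart of the argument to be: (i) proving that a partially invariant component is forced to the origin in its factor under any $\Cn$-symmetric grid placement, and (ii) using the adjacency condition of Definition~\ref{defn:NACnfold} precisely to rule out a zero-length edge between two such collapsed components — equivalently, to guarantee that every edge of $G$ has at least one endpoint contributing a nonzero displacement in the relevant factor. I would handle invariant vertices (placed at the origin, forming an independent set by hypothesis) as a special case so that no edge joins two origin-vertices. Once these degeneracy issues are resolved, the verification that $(G, p_t)$ is equivalent to $(G,p)$ for all $t$ and is $\Cn$-symmetric is a routine computation analogous to the asymmetric case, and non-triviality follows from the presence of both colours.
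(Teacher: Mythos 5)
Your proposal is correct and takes essentially the same approach as the paper's proof: the paper also adapts the grid construction by assigning rotated points $\tau(\omega)^i a_j$, $\tau(\omega)^i b_j$ to the orbits of the non-partially-invariant red and blue components, sends every partially invariant component to the origin, sets $p_t(v) = R(t)\overline{a}(v) + \overline{b}(v)$, and uses precisely the adjacency condition of Definition~\ref{defn:NACnfold} (plus the NAC condition and an independent/generic choice of the points) to rule out collapsing edges, with non-triviality from surjectivity of $\delta$ and equivariance from commutativity of planar rotations. Your minor variations (rotating both factors by conjugate angles instead of only the red one, and motivating rather than simply defining the placement of partially invariant components at the origin) are equivalent up to a global rotation and do not change the argument.
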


\begin{proof}
	The proof is based on the ``zigzag'' grid construction from \cite{flexibleLabelings} with a specific choice of the grid.	
	Let $R^0_1, \dots, R^{n-1}_1, \dots, R^0_m, \dots, R^{n-1}_m$ be the \red{} components of $G_\red^\delta$
	that are not partially invariant. We can assume that $R^i_j = \omega^i R^0_j$ for $0\leq i < n$ and $1\leq j\leq m$.
	Similarly, let $B^0_1, \dots, B^{n-1}_1, \dots, B^0_k, \dots, B^{n-1}_k$
	be the \blue{} components of $G_\blue^\delta$
	that are not partially invariant and $B^i_j = \omega^i B^0_j$ for $0\leq i < n$ and $1\leq j\leq k$.

	Let $a_1, \dots, a_m$ and $b_1,\dots, b_k$ be points in $\RR^2\setminus\{(0,0)\}$ 
	such that $a_j \neq \tau(\omega)^i a_{j'}$,
	$b_j \neq \tau(\omega)^i b_{j'}$,
	and $a_j, \tau(\omega)^ib_{j'}$ are linearly independent for $j\neq j'$ and $0\leq i<n$ arbitrary.
	We define functions $\overline{a},\overline{b} \colon V(G)\rightarrow \RR^2$ by
	\begin{equation*}
		\overline{a}(v) = \begin{cases}
			\tau(\omega)^i a_j &\text{if } v \in R^i_j \\
			(0,0) 		 &\text{otherwise,}
		\end{cases}
	\quad\text{ and }\quad
	 \overline{b}(v) = \begin{cases}
			\tau(\omega)^i b_j &\text{if } v \in B^i_j \\
			(0,0) 		 &\text{otherwise.}
		\end{cases}
	\end{equation*}
	We note that a vertex is mapped to the origin by $\overline{a}$
	(respectively $\overline{b}$)
	if and only if it lies in a \red{}
	(respectively, \blue{})
	partially invariant component.
	We now obtain for each $t \in [0,2\pi]$ a placement $p_t$ of $G$ with
	\begin{equation}\label{eq:rotation}
		p_t(v) := 
		R(t) \overline{a}(v) + \overline{b}(v) \, ,
	\end{equation}
	where $R(t)$ is the rotation matrix by $t$ radians.
		
	First, we have to show that no two adjacent vertices are mapped to the same point by the placement $p_0$.
	Assume that $p_0(u) = p_0(v)$ for some vertices $u,v$.
	By construction we have $\left(\overline{a}(u),\overline{b}(u)\right)= \left(\overline{a}(v), \overline{b}(v)\right)$.	
	Suppose this is due to the fact that
	$u$ and $v$ belong to the same \red{} and same \blue{} (possibly partially invariant) component.
	Hence, $uv\notin E(G)$, otherwise $\delta$ is not a NAC-colouring ($uv$ would yield a cycle with a single edge in one colour).
	On the other hand, if $u$ and $v$ are in two different \red{} (resp.\ \blue{}) components,
	then $\overline{a}(u)=\overline{a}(v) = (0,0)$ (resp.\ $\overline{b}(u)=\overline{b}(v)=(0,0)$).
	By our construction of $\overline{a}$
	(resp.~$\overline{b}$),
	it follows that $u,v$ both lie in partially invariant \red{} (resp.\ \blue{}) components. 
	Since these components are partially invariant,
	$uv\notin E(G)$ by the assumption that $\delta$ is $\Cn$-symmetric.
	
	Now choose $uv\in E(G)$. 
	If $\delta(uv)$ is \red{}
	(resp. \blue{}) 
	then $\overline{a}(u)= \overline{a}(v)$ 
	(resp. ${\overline{b}(u)= \overline{b}(v)}$).
	Hence, the edge length $\left\| p_t(u) - p_t(v) \right\|$ is independent of $t$.
	As no two vertices connected by an edge are mapped to the same point,
	$(G,p) := (G,p_0)$ is a framework with a flex $p_t$.
	Further,
	the flex is not trivial by surjectivity of $\delta$,
	thus $(G,p)$ is a flexible framework.
	
	Finally, we show that $p_t$ is $\Cn$-symmetric.
	If $v\in R^i_j \cap B^k_\ell$,
	then 
	\begin{align*}
		\omega v\in \tau(\omega) R^i_j \cap \tau(\omega) B^k_\ell= R^{(i+1 \Mod n)}_j \cap B^{(k+1 \Mod n)}_\ell.
	\end{align*}
	Hence, $\overline{a}(\omega v) = \tau(\omega)\overline{a}(v)$ and $\overline{b}(\omega v) = \tau(\omega)\overline{b}(v)$.
	The same equalities hold also if $v$ belongs to a partially invariant component,
	since then $\omega v$ is also in a partially invariant component.
	Using \eqref{eq:rotation} and commutativity of rotation matrices, we get
	\begin{align*}
		p_t(\omega v) &=
		 \tau(\omega) 
		R(t)\overline{a}(v) +\tau( \omega) \overline{b}(v)
		 = \tau(\omega) p_t(v).
	\end{align*}
\end{proof}

\begin{example}	
	By using the construction described in Lemma \ref{lem:NACImpliesflexible} we can construct the $\Cn$-symmetric flexible frameworks given in
	Figure~\ref{fig:flexrot-574214559646161505}.
	
	\begin{figure}[ht]
	  \centering
	    \begin{tikzpicture}[scale=0.45] 
				\begin{scope}[xshift=0.cm,yshift=0.cm]
					\node[midvertexR] (1) at (-2.,0.) {};
					\node[midvertexR] (2) at (0.,0.) {};
					\node[midvertexR] (3) at (0.,2.) {};
					\node[midvertexR] (4) at (0.,-2.) {};
					\node[midvertexR] (5) at (1.,-1.) {};
					\node[midvertexR] (6) at (1.,1.) {};
					\node[midvertexR] (7) at (2.,0.) {};
					\node[midvertexR] (8) at (0.,0.) {};
					\node[midvertexR] (9) at (-1.,-1.) {};
					\node[midvertexR] (10) at (0.,0.) {};
					\node[midvertexR] (11) at (0.,0.) {};
					\node[midvertexR] (12) at (-1.,1.) {};
					\draw[bedge] (1)edge(9);
					\draw[bedge] (1)edge(10);
					\draw[bedge] (2)edge(4);
					\draw[bedge] (2)edge(5);
					\draw[bedge] (3)edge(8);
					\draw[bedge] (3)edge(12);
					\draw[bedge] (4)edge(5);
					\draw[bedge] (6)edge(7);
					\draw[bedge] (6)edge(11);
					\draw[bedge] (7)edge(11);
					\draw[bedge] (8)edge(12);
					\draw[bedge] (9)edge(10);
					\draw[redge] (1)edge(11);
					\draw[redge] (1)edge(12);
					\draw[redge] (2)edge(3);
					\draw[redge] (2)edge(6);
					\draw[redge] (3)edge(6);
					\draw[redge] (4)edge(8);
					\draw[redge] (4)edge(9);
					\draw[redge] (5)edge(7);
					\draw[redge] (5)edge(10);
					\draw[redge] (7)edge(10);
					\draw[redge] (8)edge(9);
					\draw[redge] (11)edge(12);
				\end{scope}
				\begin{scope}[xshift=4.5cm,yshift=0.cm]
					\node[midvertexR] (1) at (-1.623,0.782) {};
					\node[midvertexR] (2) at (0.782,-0.377) {};
					\node[midvertexR] (3) at (0.782,1.623) {};
					\node[midvertexR] (4) at (-0.782,-1.623) {};
					\node[midvertexR] (5) at (0.623,-1.782) {};
					\node[midvertexR] (6) at (1.782,0.623) {};
					\node[midvertexR] (7) at (1.623,-0.782) {};
					\node[midvertexR] (8) at (-0.782,0.377) {};
					\node[midvertexR] (9) at (-1.782,-0.623) {};
					\node[midvertexR] (10) at (-0.377,-0.782) {};
					\node[midvertexR] (11) at (0.377,0.782) {};
					\node[midvertexR] (12) at (-0.623,1.782) {};
					\draw[bedge] (1)edge(9);
					\draw[bedge] (1)edge(10);
					\draw[bedge] (2)edge(4);
					\draw[bedge] (2)edge(5);
					\draw[bedge] (3)edge(8);
					\draw[bedge] (3)edge(12);
					\draw[bedge] (4)edge(5);
					\draw[bedge] (6)edge(7);
					\draw[bedge] (6)edge(11);
					\draw[bedge] (7)edge(11);
					\draw[bedge] (8)edge(12);
					\draw[bedge] (9)edge(10);
					\draw[redge] (1)edge(11);
					\draw[redge] (1)edge(12);
					\draw[redge] (2)edge(3);
					\draw[redge] (2)edge(6);
					\draw[redge] (3)edge(6);
					\draw[redge] (4)edge(8);
					\draw[redge] (4)edge(9);
					\draw[redge] (5)edge(7);
					\draw[redge] (5)edge(10);
					\draw[redge] (7)edge(10);
					\draw[redge] (8)edge(9);
					\draw[redge] (11)edge(12);
				\end{scope}
				\begin{scope}[xshift=9.cm,yshift=0.cm]
					\node[midvertexR] (1) at (-0.777,0.975) {};
					\node[midvertexR] (2) at (0.975,-1.223) {};
					\node[midvertexR] (3) at (0.975,0.777) {};
					\node[midvertexR] (4) at (-0.975,-0.777) {};
					\node[midvertexR] (5) at (-0.223,-1.975) {};
					\node[midvertexR] (6) at (1.975,-0.223) {};
					\node[midvertexR] (7) at (0.777,-0.975) {};
					\node[midvertexR] (8) at (-0.975,1.223) {};
					\node[midvertexR] (9) at (-1.975,0.223) {};
					\node[midvertexR] (10) at (-1.223,-0.975) {};
					\node[midvertexR] (11) at (1.223,0.975) {};
					\node[midvertexR] (12) at (0.223,1.975) {};
					\draw[bedge] (1)edge(9);
					\draw[bedge] (1)edge(10);
					\draw[bedge] (2)edge(4);
					\draw[bedge] (2)edge(5);
					\draw[bedge] (3)edge(8);
					\draw[bedge] (3)edge(12);
					\draw[bedge] (4)edge(5);
					\draw[bedge] (6)edge(7);
					\draw[bedge] (6)edge(11);
					\draw[bedge] (7)edge(11);
					\draw[bedge] (8)edge(12);
					\draw[bedge] (9)edge(10);
					\draw[redge] (1)edge(11);
					\draw[redge] (1)edge(12);
					\draw[redge] (2)edge(3);
					\draw[redge] (2)edge(6);
					\draw[redge] (3)edge(6);
					\draw[redge] (4)edge(8);
					\draw[redge] (4)edge(9);
					\draw[redge] (5)edge(7);
					\draw[redge] (5)edge(10);
					\draw[redge] (7)edge(10);
					\draw[redge] (8)edge(9);
					\draw[redge] (11)edge(12);
				\end{scope}
				\begin{scope}[xshift=13.5cm,yshift=0.cm]
					\node[midvertexR] (1) at (-0.099,0.434) {};
					\node[midvertexR] (2) at (0.434,-1.901) {};
					\node[midvertexR] (3) at (0.434,0.099) {};
					\node[midvertexR] (4) at (-0.434,-0.099) {};
					\node[midvertexR] (5) at (-0.901,-1.434) {};
					\node[midvertexR] (6) at (1.434,-0.901) {};
					\node[midvertexR] (7) at (0.099,-0.434) {};
					\node[midvertexR] (8) at (-0.434,1.901) {};
					\node[midvertexR] (9) at (-1.434,0.901) {};
					\node[midvertexR] (10) at (-1.901,-0.434) {};
					\node[midvertexR] (11) at (1.901,0.434) {};
					\node[midvertexR] (12) at (0.901,1.434) {};
					\draw[bedge] (1)edge(9);
					\draw[bedge] (1)edge(10);
					\draw[bedge] (2)edge(4);
					\draw[bedge] (2)edge(5);
					\draw[bedge] (3)edge(8);
					\draw[bedge] (3)edge(12);
					\draw[bedge] (4)edge(5);
					\draw[bedge] (6)edge(7);
					\draw[bedge] (6)edge(11);
					\draw[bedge] (7)edge(11);
					\draw[bedge] (8)edge(12);
					\draw[bedge] (9)edge(10);
					\draw[redge] (1)edge(11);
					\draw[redge] (1)edge(12);
					\draw[redge] (2)edge(3);
					\draw[redge] (2)edge(6);
					\draw[redge] (3)edge(6);
					\draw[redge] (4)edge(8);
					\draw[redge] (4)edge(9);
					\draw[redge] (5)edge(7);
					\draw[redge] (5)edge(10);
					\draw[redge] (7)edge(10);
					\draw[redge] (8)edge(9);
					\draw[redge] (11)edge(12);
				\end{scope}
				\begin{scope}[xshift=18.cm,yshift=0.cm]
					\node[midvertexR] (1) at (-0.099,-0.434) {};
					\node[midvertexR] (2) at (-0.434,-1.901) {};
					\node[midvertexR] (3) at (-0.434,0.099) {};
					\node[midvertexR] (4) at (0.434,-0.099) {};
					\node[midvertexR] (5) at (-0.901,-0.566) {};
					\node[midvertexR] (6) at (0.566,-0.901) {};
					\node[midvertexR] (7) at (0.099,0.434) {};
					\node[midvertexR] (8) at (0.434,1.901) {};
					\node[midvertexR] (9) at (-0.566,0.901) {};
					\node[midvertexR] (10) at (-1.901,0.434) {};
					\node[midvertexR] (11) at (1.901,-0.434) {};
					\node[midvertexR] (12) at (0.901,0.566) {};
					\draw[bedge] (1)edge(9);
					\draw[bedge] (1)edge(10);
					\draw[bedge] (2)edge(4);
					\draw[bedge] (2)edge(5);
					\draw[bedge] (3)edge(8);
					\draw[bedge] (3)edge(12);
					\draw[bedge] (4)edge(5);
					\draw[bedge] (6)edge(7);
					\draw[bedge] (6)edge(11);
					\draw[bedge] (7)edge(11);
					\draw[bedge] (8)edge(12);
					\draw[bedge] (9)edge(10);
					\draw[redge] (1)edge(11);
					\draw[redge] (1)edge(12);
					\draw[redge] (2)edge(3);
					\draw[redge] (2)edge(6);
					\draw[redge] (3)edge(6);
					\draw[redge] (4)edge(8);
					\draw[redge] (4)edge(9);
					\draw[redge] (5)edge(7);
					\draw[redge] (5)edge(10);
					\draw[redge] (7)edge(10);
					\draw[redge] (8)edge(9);
					\draw[redge] (11)edge(12);
				\end{scope}
				\begin{scope}[xshift=22.5cm,yshift=0.cm]
					\node[midvertexR] (1) at (-0.777,-0.975) {};
					\node[midvertexR] (2) at (-0.975,-1.223) {};
					\node[midvertexR] (3) at (-0.975,0.777) {};
					\node[midvertexR] (4) at (0.975,-0.777) {};
					\node[midvertexR] (5) at (-0.223,-0.025) {};
					\node[midvertexR] (6) at (0.025,-0.223) {};
					\node[midvertexR] (7) at (0.777,0.975) {};
					\node[midvertexR] (8) at (0.975,1.223) {};
					\node[midvertexR] (9) at (-0.025,0.223) {};
					\node[midvertexR] (10) at (-1.223,0.975) {};
					\node[midvertexR] (11) at (1.223,-0.975) {};
					\node[midvertexR] (12) at (0.223,0.025) {};
					\draw[bedge] (1)edge(9);
					\draw[bedge] (1)edge(10);
					\draw[bedge] (2)edge(4);
					\draw[bedge] (2)edge(5);
					\draw[bedge] (3)edge(8);
					\draw[bedge] (3)edge(12);
					\draw[bedge] (4)edge(5);
					\draw[bedge] (6)edge(7);
					\draw[bedge] (6)edge(11);
					\draw[bedge] (7)edge(11);
					\draw[bedge] (8)edge(12);
					\draw[bedge] (9)edge(10);
					\draw[redge] (1)edge(11);
					\draw[redge] (1)edge(12);
					\draw[redge] (2)edge(3);
					\draw[redge] (2)edge(6);
					\draw[redge] (3)edge(6);
					\draw[redge] (4)edge(8);
					\draw[redge] (4)edge(9);
					\draw[redge] (5)edge(7);
					\draw[redge] (5)edge(10);
					\draw[redge] (7)edge(10);
					\draw[redge] (8)edge(9);
					\draw[redge] (11)edge(12);
				\end{scope}
				\begin{scope}[xshift=27.cm,yshift=0.cm]
					\node[midvertexR] (1) at (-1.623,-0.782) {};
					\node[midvertexR] (2) at (-0.782,-0.377) {};
					\node[midvertexR] (3) at (-0.782,1.623) {};
					\node[midvertexR] (4) at (0.782,-1.623) {};
					\node[midvertexR] (5) at (0.623,-0.218) {};
					\node[midvertexR] (6) at (0.218,0.623) {};
					\node[midvertexR] (7) at (1.623,0.782) {};
					\node[midvertexR] (8) at (0.782,0.377) {};
					\node[midvertexR] (9) at (-0.218,-0.623) {};
					\node[midvertexR] (10) at (-0.377,0.782) {};
					\node[midvertexR] (11) at (0.377,-0.782) {};
					\node[midvertexR] (12) at (-0.623,0.218) {};
					\draw[bedge] (1)edge(9);
					\draw[bedge] (1)edge(10);
					\draw[bedge] (2)edge(4);
					\draw[bedge] (2)edge(5);
					\draw[bedge] (3)edge(8);
					\draw[bedge] (3)edge(12);
					\draw[bedge] (4)edge(5);
					\draw[bedge] (6)edge(7);
					\draw[bedge] (6)edge(11);
					\draw[bedge] (7)edge(11);
					\draw[bedge] (8)edge(12);
					\draw[bedge] (9)edge(10);
					\draw[redge] (1)edge(11);
					\draw[redge] (1)edge(12);
					\draw[redge] (2)edge(3);
					\draw[redge] (2)edge(6);
					\draw[redge] (3)edge(6);
					\draw[redge] (4)edge(8);
					\draw[redge] (4)edge(9);
					\draw[redge] (5)edge(7);
					\draw[redge] (5)edge(10);
					\draw[redge] (7)edge(10);
					\draw[redge] (8)edge(9);
					\draw[redge] (11)edge(12);
				\end{scope}
			\end{tikzpicture}
	  \caption{A flexible $\C_4$-symmetric placement for a given $\C_4$-symmetric NAC-colouring.}\label{fig:flexrot-574214559646161505}
	\end{figure}
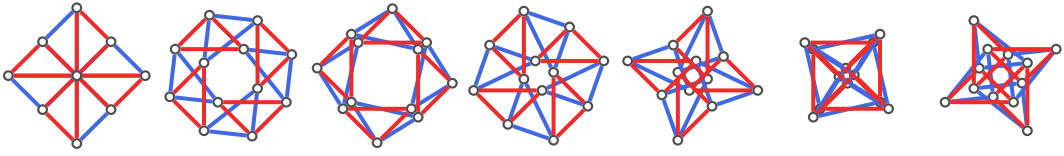
\end{example}

\section{\texorpdfstring{$\Cn$}{Cn}-symmetric flexible injective placements}
We call a framework \emph{proper} $\Cn$-symmetric flexible if it has a non-trivial flex
where all but finitely many $\Cn$-symmetric placements are injective.
The constructed framework given by the proof of Lemma~\ref{lem:NACImpliesflexible} may not be proper flexible.
A careful inspection shows that no two vertices coincide during the constructed flex
if and only if the conditions summarised in the following claim hold. 
\begin{proposition}
	Let a graph $G$ have a $\Cn$-symmetric NAC-colouring such that
	  $|V(B)\cap V(R)|\leq 1$ for each \blue\ component $B$ and \red\ component $R$,
	  no two \blue{}, resp.\ \red{}, partially invariant components are connected by a \red{}, resp.\ \blue{}, path, and
	  at most one vertex is in a \blue{} and \red{} partially invariant component simultaneously.
	Then $G$ has a proper flexible $\Cn$-symmetric placement.
\end{proposition}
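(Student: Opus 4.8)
The plan is to show that the flex $p_t$ constructed in the proof of Lemma~\ref{lem:NACImpliesflexible} is injective for all but finitely many $t\in[0,2\pi]$; discarding those exceptional parameters then exhibits a proper flexible $\Cn$-symmetric placement. Writing $p_t(v)=R(t)\overline{a}(v)+\overline{b}(v)$ and setting $\Phi(v):=(\overline{a}(v),\overline{b}(v))$, the equality $p_t(u)=p_t(v)$ is equivalent to
\begin{equation*}
	R(t)\bigl(\overline{a}(u)-\overline{a}(v)\bigr) = \overline{b}(v)-\overline{b}(u).
\end{equation*}
First I would fix a pair $u\neq v$ and read off the solution set in $t$. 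If $\overline{a}(u)\neq\overline{a}(v)$, the left-hand side traces a circle of positive radius as $t$ varies, so it meets the fixed right-hand vector for only finitely many $t$; if $\overline{a}(u)=\overline{a}(v)$ but $\overline{b}(u)\neq\overline{b}(v)$, there is no solution at all. Hence a pair can coincide for infinitely many $t$ only when $\Phi(u)=\Phi(v)$, and as there are finitely many pairs it suffices to prove that $\Phi$ is injective.

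To control $\Phi$ I would first record, as in the proof of Lemma~\ref{lem:NACImpliesflexible}, that since each $a_j\neq(0,0)$, the $a_j$ are pairwise distinct modulo rotation, and no non-trivial planar rotation fixes a non-zero vector, the equality $\overline{a}(u)=\overline{a}(v)\neq(0,0)$ forces $u,v$ to lie in a common non-partially-invariant \red{} component, whereas $\overline{a}(v)=(0,0)$ holds precisely when $v$ lies in a partially invariant \red{} component; the analogous statement holds for $\overline{b}$ and \blue{}. Injectivity of $\Phi$ then follows by a case analysis on $\Phi(u)=\Phi(v)$ with $u\neq v$. If both coordinates are non-zero, then $u,v$ share a \red{} component $R$ and a \blue{} component $B$, so $u,v\in V(R)\cap V(B)$, contradicting the first hypothesis. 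If $\overline{a}(u)=\overline{a}(v)\neq(0,0)$ while $\overline{b}(u)=\overline{b}(v)=(0,0)$, then $u,v$ are joined by a \red{} path inside their common \red{} component yet lie in partially invariant \blue{} components; the second hypothesis forces these two \blue{} components to coincide, and then the first hypothesis again yields $u=v$. The case with the colours exchanged is symmetric. Finally, if both coordinates vanish, then $u$ and $v$ each lie simultaneously in a partially invariant \red{} and a partially invariant \blue{} component, which the third hypothesis permits for at most one vertex; hence $u=v$.

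Combining the two parts finishes the argument: $\Phi$ injective means no pair coincides for all $t$, so for each of the finitely many pairs coincidence happens for at most finitely many parameters, and outside this finite exceptional set every $p_t$ is injective. Since Lemma~\ref{lem:NACImpliesflexible} already provides that the flex is non-trivial and $\Cn$-symmetric, this shows $(G,p)$ is proper flexible. I expect the main obstacle to be the bookkeeping in the mixed cases: one must check that two endpoints of a monochromatic path lying in distinct partially invariant components of the opposite colour is exactly the configuration excluded by the second hypothesis, and that after collapsing them to a single component the two vertices genuinely lie in the intersection of one \red{} and one \blue{} component, so that the first hypothesis applies.
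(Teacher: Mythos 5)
Your proposal is correct and follows exactly the route the paper intends: the paper states this proposition as a ``careful inspection'' of the flex $p_t(v)=R(t)\overline{a}(v)+\overline{b}(v)$ built in the proof of Lemma~\ref{lem:NACImpliesflexible}, and your argument is precisely that inspection, with the three hypotheses matched case-by-case to the ways $\bigl(\overline{a}(u),\overline{b}(u)\bigr)=\bigl(\overline{a}(v),\overline{b}(v)\bigr)$ can occur. Your reduction to injectivity of $\Phi$ (a pair coincides for infinitely many $t$ only if $\Phi(u)=\Phi(v)$) and the resulting ``all but finitely many placements injective'' conclusion is exactly what properness requires, so the proof is complete.
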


We provide also a necessary combinatorial condition on 
the existence of a proper $\Cn$-symmetric flexible placement of a graph $G$ analogous to~\cite{movableGraphs}.
We call a NAC-colouring of $G$ \emph{active} for a $\Cn$-symmetric motion if it can be obtained from a valuation and threshold
using~\eqref{eq:valuation2NAC}.
We start with the following lemma.

\begin{lemma}
	\label{lem:constantDistance}
	Let $C$ be a $\Cn$-symmetric motion of $(G,p)$.
	Let $u,v$ be vertices of~$G$ such that $uv\not\in E(G)$
	and $q(u)\neq q(v)$ for all $q \in C$.
	If there exists a $uv$-path~$P$ in $G$ such that $P$ is monochromatic for all active NAC-colourings of $C$,
	then $||q(u)- q(v)||$ is the same for all $q \in C$. 
	Particularly, $C$ is a $\Cn$-symmetric motion of $\left(G',p\right)$,
	where $G'=(V(G),E(G)\cup \{\gamma u \gamma v : \gamma \in \Cn\})$.
\end{lemma}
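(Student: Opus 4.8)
The plan is to reduce the whole statement to a single claim about the rational function $\Wfun{u}{v}\Zfun{u}{v}\in\CC(C)$, namely that it has no poles and is therefore constant. On the real points of $C$ we have $\|q(u)-q(v)\|^2=\Wfun{u}{v}\Zfun{u}{v}$, so showing that $\|q(u)-q(v)\|$ is constant along $C$ is equivalent to showing $\Wfun{u}{v}\Zfun{u}{v}\in\CC$. Since $C$ is a curve, a function in $\CC(C)$ lies in $\CC$ precisely when it has no poles, i.e.\ when $\nu(\Wfun{u}{v}\Zfun{u}{v})\ge 0$ for every valuation $\nu$ of $\CC(C)$ that is trivial on $\CC$. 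Thus it suffices to bound this value from below for an arbitrary such $\nu$.

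Write the path as $P=(u_0=u,u_1,\dots,u_\ell=v)$ with edges $e_s=u_su_{s+1}$. The central step is to prove that, for every valuation $\nu$ trivial on $\CC$, the quantity $\nu(\Wfun{u_s}{u_{s+1}})$ is the same for all $s$. Suppose not, and let $\mu=\min_s\nu(\Wfun{u_s}{u_{s+1}})$, so that some edge of $P$ has strictly larger value. Taking the threshold $\alpha=\mu$ in \eqref{eq:valuation2NAC} produces a colouring $\delta$ that is a NAC-colouring by \cite[Theorem~2.8]{movableGraphs}, is $\Cn$-symmetric by the same computation on $\nu(\Wfun{\omega^k u}{\omega^k v})$ as in the proof of Lemma~\ref{lem:flexibleImpliesNAC}, and is surjective because both colours already appear on $P$. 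Hence $\delta$ is an active NAC-colouring for which $P$ is \emph{not} monochromatic, as it carries a $\blue$ edge (of value $\mu$) and a $\red$ edge (of value exceeding $\mu$). This contradicts the hypothesis, so all edges of $P$ share a common value $\mu=\nu(\Wfun{u_s}{u_{s+1}})$.

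Next I would exploit the telescoping identities $\Wfun{u}{v}=\sum_{s}\Wfun{u_s}{u_{s+1}}$ and $\Zfun{u}{v}=\sum_{s}\Zfun{u_s}{u_{s+1}}$ together with the fact that each $e_s$ is an edge of $G$, so $\Wfun{u_s}{u_{s+1}}\Zfun{u_s}{u_{s+1}}=\|p_{u_s}-p_{u_{s+1}}\|^2$ is a nonzero constant and $\nu(\Zfun{u_s}{u_{s+1}})=-\mu$. The ultrametric inequality then yields $\nu(\Wfun{u}{v})\ge\mu$ and $\nu(\Zfun{u}{v})\ge-\mu$, and hence $\nu(\Wfun{u}{v}\Zfun{u}{v})=\nu(\Wfun{u}{v})+\nu(\Zfun{u}{v})\ge 0$. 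As $\nu$ was arbitrary, $\Wfun{u}{v}\Zfun{u}{v}$ has no poles, is therefore constant, and $\|q(u)-q(v)\|$ is constant on $C$, equal to $\|p(u)-p(v)\|$.

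For the final assertion I would check that $C$ satisfies the defining equations of $\mathcal{V}_n(G',p)$. The only new equations are the edge-length equations for the added pairs $\gamma u\gamma v$; by $\Cn$-symmetry of $C$ we have $\|q(\gamma u)-q(\gamma v)\|=\|q(u)-q(v)\|$, which is the constant just obtained, and the same symmetry turns the hypothesis $q(u)\neq q(v)$ for all $q\in C$ into $q(\gamma u)\neq q(\gamma v)$, so the new pairs are legitimate edges. Thus $C\subseteq\mathcal{V}_n(G',p)$ is an irreducible curve on which the required varying "angle" is inherited from $G\subseteq G'$, so $C$ is a $\Cn$-symmetric motion of $(G',p)$. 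I expect the main obstacle to be the middle step: making precise that monochromaticity across \emph{all} active NAC-colourings forces equality of the valuation values along $P$. This rests on the fact from \cite{movableGraphs} that every valuation-and-threshold pair yields a NAC-colouring, so a separating threshold is always available to manufacture the contradicting non-monochromatic colouring whenever the values differ.
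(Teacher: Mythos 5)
Your proof is correct, and while it runs on the same engine as the paper's --- valuations of $\CC(C)$ trivial on $\CC$ together with a threshold yield active NAC-colourings via \cite[Theorem~2.8]{movableGraphs} --- the organization is genuinely different. The paper's proof is a contradiction argument adapted from \cite[Lemma~3.1]{movableGraphs}: if $\|q(u)-q(v)\|$ varied, then some consecutive-edge ratio $\Wfun{u_i}{u_{i+1}}/\Wfun{u_{i-1}}{u_i}$ along $P$ would be transcendental, and Chevalley's theorem furnishes a valuation and threshold whose associated active NAC-colouring makes $P$ bichromatic. You instead prove the positive statement that for \emph{every} valuation $\nu$ trivial on $\CC$ the values $\nu(\Wfun{u_s}{u_{s+1}})$ agree along $P$ (your threshold-at-the-minimum device is the paper's separating trick run in reverse), and then deduce $\nu(\Wfun{u}{v}\Zfun{u}{v})\geq 0$ from telescoping, the ultrametric inequality, and $\nu(\Zfun{u_s}{u_{s+1}})=-\nu(\Wfun{u_s}{u_{s+1}})$, so that $\Wfun{u}{v}\Zfun{u}{v}$ has no poles and is constant. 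This buys you two things: you never need the geometric step ``the distance varies, hence some consecutive angle varies'' that the paper leaves implicit in its citation, and your treatment of the rotated pairs $\gamma u\gamma v$ uses only the symmetry equations of $\mathcal{V}_n(G,p)$ (equivalently $\Wfun{\omega u}{\omega v}\Zfun{\omega u}{\omega v}=\Wfun{u}{v}\Zfun{u}{v}$), so you can dispense with the paper's appeal to the $\Cn$-symmetry of active NAC-colourings and the monochromaticity of $\gamma(P)$. What the paper's route buys is brevity, since it delegates the analysis to \cite{movableGraphs}; and note that your closing fact ``no poles implies constant'' is Chevalley's theorem in contrapositive form, so the two arguments stand on the same foundations. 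One pedantic caveat, which applies to both proofs: identifying the constant value with $\|p(u)-p(v)\|$, as needed for $C\subseteq\mathcal{V}_n(G',p)$, uses that $p$ itself lies on $C$.
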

\begin{proof}
	Using the fact that all active NAC-colourings are $\Cn$-symmetric (see the proof of Lemma~\ref{lem:flexibleImpliesNAC}),
	we can add the edge $uv$ to the graph $G$ by adapting the proof of~\cite[Lemma~3.1]{movableGraphs} 
	similarly to Lemma~\ref{lem:flexibleImpliesNAC} (if the angle between two consecutive edges $u_{i-1}u_i$, $u_iu_{i+1}$ in $P$ varies,
	i.e.,  the distance between $u$ and $v$ changes, then $\Wfun{u_i}{u_{i+1}}/\Wfun{u_{i-1}}{u_i}$ is a transcendental element).
	For $\gamma\in\Cn$, $\gamma u$ and $\gamma v$ are connected by the path $\gamma(P)$, which is monochromatic for all
	active NAC-colourings of $C$. Hence, we can use the same argument to add also $\gamma u\gamma v$. 
\end{proof}

We follow the approach in~\cite{movableGraphs}.
For a $\Cn$-symmetric graph $G$, let
$\upairsCn{G}$ denote all pairs $\{u,v\}\subset V(G)$ such that $uv\notin E(G)$ and there exists a path
from $u$ to~$v$ which is monochromatic for all $\Cn$-symmetric NAC-colourings of~$G$.	
Let $G_0, \dots, G_n$ be a sequence of $\Cn$-symmetric graphs such that
	$G=G_0$,
	$G_i=(V(G_{i-1}),E(G_{i-1}) \cup \upairsCn{G_{i-1}})$ for $i\in\{1,\dots,n\}$, and
	$\upairsCn{G_n}=\emptyset$.
The graph $G_n$ is called \emph{the $\Cn$-symmetric constant distance closure of $G$}, denoted by $\cdcCn{G}$.
In the same manner as in~\cite{movableGraphs}, we get the following statement.
\begin{theorem}
	A $\Cn$-symmetric graph $G$ admits a proper $\Cn$-symmetric flexible placement
	if and only if $\cdcCn{G}$ does as well. 
	In particular, if $\cdcCn{G}$ is complete, then $G$ has no proper $\Cn$-symmetric flexible placement.
\end{theorem}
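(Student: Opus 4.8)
The plan is to prove the equivalence by exhibiting a single $\Cn$-symmetric motion that witnesses flexibility of $G$ and of $\cdcCn{G}$ at the same time, following the strategy of \cite{movableGraphs}. The backward implication is immediate: by construction $\cdcCn{G}$ has the same vertex set as $G$ and $E(G)\subseteq E(\cdcCn{G})$, so any proper $\Cn$-symmetric flex of $\cdcCn{G}$ is a flex of $G$ using the very same placements. Non-triviality survives because congruence is defined on all vertex pairs and is independent of the edge set, injectivity is a property of the placements alone, and $\Cn$-symmetry of each placement is unaffected. Hence $G$ inherits a proper $\Cn$-symmetric flexible placement.

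For the forward implication I would begin with a proper $\Cn$-symmetric flexible placement of $G$ and pass to the associated $\Cn$-symmetric motion $C$: an irreducible curve carrying infinitely many real points, almost all of which are injective placements. Writing $G=G_0,\dots,G_n=\cdcCn{G}$ for the defining sequence, the goal is to prove by induction on $i$ that $C$ is also a $\Cn$-symmetric motion of $(G_i,p)$, the base case being the hypothesis. For the inductive step, fix a pair $\{u,v\}\in\upairsCn{G_{i-1}}$; by definition there is a $uv$-path that is monochromatic for every $\Cn$-symmetric NAC-colouring of $G_{i-1}$. The crucial observation is that, by the argument in the proof of Lemma~\ref{lem:flexibleImpliesNAC}, every active NAC-colouring of $C$ viewed as a motion of $G_{i-1}$ is itself $\Cn$-symmetric, so this path is in particular monochromatic for all active NAC-colourings of $C$.

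Before invoking Lemma~\ref{lem:constantDistance} I must verify its remaining hypothesis, namely $q(u)\neq q(v)$ for all $q\in C$; this is exactly where properness enters. If $q(u)=q(v)$ held for infinitely many $q\in C$, then by irreducibility it would hold identically, forcing $p_t(u)=p_t(v)$ along the entire real flex and contradicting that all but finitely many placements are injective. Lemma~\ref{lem:constantDistance} then permits adjoining the whole orbit $\{\gamma u\gamma v:\gamma\in\Cn\}$ while keeping $C$ a $\Cn$-symmetric motion; repeating over all pairs of $\upairsCn{G_{i-1}}$ produces $G_i$ and closes the induction. After $n$ steps $C$ is a $\Cn$-symmetric motion of $\cdcCn{G}$, and since $C$ is literally the same curve its same real points remain injective, so $\cdcCn{G}$ acquires a proper $\Cn$-symmetric flexible placement. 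The ``in particular'' clause then follows at once: a complete graph admits no NAC-colouring, since any non-monochromatic triangle already violates the NAC condition, so by Lemma~\ref{lem:flexibleImpliesNAC} a complete $\cdcCn{G}$ admits no $\Cn$-symmetric flexible placement, hence none that is proper, and therefore neither does $G$.

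I expect the main obstacle to be the inductive step, and specifically the correct matching of quantifiers. One must ensure that edge additions genuinely preserve $C$ being a motion at every stage: that the transcendence/angle-variation condition defining a motion, inherited from $G\subseteq G_i$, persists, and that the active NAC-colourings used to certify monochromaticity are taken with respect to the current graph $G_{i-1}$ rather than the original $G$. Reconciling the ``all $\Cn$-symmetric NAC-colourings'' quantifier in the definition of $\upairsCn{}$ with the ``all active NAC-colourings'' quantifier demanded by Lemma~\ref{lem:constantDistance} is the delicate point, and it is resolved precisely by the $\Cn$-symmetry of active colourings established in the proof of Lemma~\ref{lem:flexibleImpliesNAC}.
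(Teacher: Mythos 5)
Your proposal follows essentially the same route as the paper, which itself gives no details and simply asserts that the statement follows ``in the same manner as in \cite{movableGraphs}'': the backward implication from the inclusion $E(G)\subseteq E(\cdcCn{G})$, and the forward implication by fixing one $\Cn$-symmetric motion $C$ with infinitely many injective real points and inductively adjoining the orbits of all pairs in $\upairsCn{G_{i-1}}$ via Lemma~\ref{lem:constantDistance}, using that all active NAC-colourings of $C$ are $\Cn$-symmetric (proof of Lemma~\ref{lem:flexibleImpliesNAC}) to pass from ``monochromatic for all $\Cn$-symmetric NAC-colourings'' to ``monochromatic for all active NAC-colourings''. Your treatment of the ``in particular'' clause (a complete graph has no NAC-colouring at all, hence by Lemma~\ref{lem:flexibleImpliesNAC} no $\Cn$-symmetric flexible placement, proper or not) is also the intended one.

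There is, however, one gap in the inductive step, namely in your verification of the hypothesis of Lemma~\ref{lem:constantDistance}. The coincidence set $\{q\in C : q(u)=q(v)\}$ is Zariski-closed in the irreducible curve $C$, so it is either all of $C$ or finite; properness only excludes the first alternative. What your argument actually establishes is therefore ``$q(u)\neq q(v)$ for all but finitely many $q\in C$'', which is strictly weaker than the lemma's hypothesis ``$q(u)\neq q(v)$ for all $q\in C$'' --- and for a general non-adjacent pair on a general motion, finitely many isolated coincidences can indeed occur, so the two statements are not interchangeable. The way to close this is to note that the valuation argument inside the proof of Lemma~\ref{lem:constantDistance} (a path monochromatic for all active NAC-colourings forces $\Wfun{u}{v}\Zfun{u}{v}$ to be constant on $C$) does not use the non-coincidence hypothesis at all: first conclude that $\Wfun{u}{v}\Zfun{u}{v}$ is constant on $C$, then observe that this constant is nonzero because $C$ contains an injective placement by properness, and deduce that $q(u)\neq q(v)$ for \emph{every} $q\in C$, since a coincidence would force the constant to vanish. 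Only after this bootstrapping may you legitimately invoke the lemma to conclude that $C$ is a $\Cn$-symmetric motion of the enlarged graph; with that one-sentence repair, the rest of your induction and the theorem go through as you describe.
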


While we have only dealt with frameworks with rotational symmetry,
there are also reflectional and translational symmetry in the plane.
Although flexible placements that preserve translational symmetry have very recently been investigated \cite{Dperiodic2019},
not much is known for flexible placements that preserve reflectional symmetry or preserve both reflectional and rotational symmetry.

\paragraph{Acknowledgments.}
This project has received funding from the European Union's Horizon~2020 research and innovation programme under the Marie Sk\l{}odowska-Curie grant agreement No~675789.
The project was supported by the Austrian Science Fund (FWF): P31061, P31888 and W1214-N15, 
and by the Ministry of Education, Youth and Sports of the Czech Republic, project no. CZ.02.1.01/0.0/ 0.0/16\_019/0000778.


\end{document}